\def\bysame{\leavevmode\hbox to3em{\hrulefill}\thinspace}
\newcommand{\beq}{\begin{equation}}
\newcommand{\eeq}{\end{equation}}
\newcommand{\p}{\mathsf{p}}
\newcommand{\m}{\mathsf{m}}
\newcommand{\A}{\mathcal{A}}
\newcommand{\T}{\mathbb{T}}
\newcommand{\K}{\mathcal{K}}
\newcommand{\CP}{\mathbb{CP}}
\newcommand{\WP}{\mathbb{WP}}
\newcommand{\cO}{\mathscr{O}}
\newcommand{\Z}{\mathbb{Z}}
\newcommand{\C}{\mathbb{C}}
\newcommand{\N}{\mathbb{N}}
\newcommand{\Cs}{C$^*$-}
\renewcommand{\hat}{\widehat}
\DeclareMathOperator{\Mat}{Mat}
\DeclareMathOperator{\End}{End}
\newtheorem{thm}{Theorem}[section]
\newtheorem{prop}[thm]{Proposition}
\newtheorem{exmp}{Example}[section]
\title{Gysin exact sequences for quantum weighted lens spaces}
\author{Francesca Arici}
\address{Institute for Mathematics, Astrophysics and Particle Physics, Faculty of Science, Radboud University Nijmegen, Heyendaalseweg 135, 6525AJ Nijmegen, The Netherlands}
\email{f.arici@math.ru.nl}
\thanks{This research was partially supported by NWO under the VIDI-grant \mbox{016.133.326}. }
\begin{document}

\subjclass[2010]{19K35, 55R25, 46L08, 58B32}
\keywords{KK-theory, Pimsner algebras, Gysin sequences, circle actions, quantum principal bundles, quantum lens spaces, quantum weighted projective spaces.}

\begin{abstract}
We describe quantum weighted lens spaces as total spaces of quantum principal circle bundles, using a Cuntz-Pimsner model. The corresponding Pimsner exact sequence is interpreted as a noncommutative analogue of the Gysin exact sequence. We use the sequence to compute the K-theory and K-homology groups of quantum weighted lens spaces, extending previous results and computations due to the author and collaborators. \end{abstract}

\maketitle

\tableofcontents
\parskip 1ex
\linespread{1.1} 

\maketitle

\section{Introduction}

Quantum lens spaces, both weighted and unweighted, have been the subject of increasing interest in the last years. They are Cuntz-Krieger algebras of a directed graph \cite{HS03} and have played an important role in the classification program of \mbox{\Cs algebras} \cite{ERRS16}. Using graph algebra techniques their K-theory groups have been computed recently in \cite{BS16} under very general assumptions on the weight. From a more geometric point of view, they have a natural structure of noncommutative principal circle bundles over quantum weighted projective spaces \cite{ABL15,AKL16,BF12,BF15,SV15} and can thus be interpreted as a deformation of their classical counterparts. 
In this paper we focus on the noncommutative topology of quantum weighted lens spaces, realising them as Cuntz-Pimsner algebras of self-Morita equivalence bimodules. This allows us to compute their K-theory and K-homology groups, using different techniques than those in \cite{BS16}.

Being graph algebras, quantum weighted lens spaces admit a Cuntz-Pimsner model where the coefficient algebra is the algebra of functions on the vertex space. This picture is very well suited to encode the dynamical information contained in the graph, but has the disadvantage that the fixed point algebra for the natural gauge action does not agree with the coefficient algebra. In the Cuntz-Pimsner model we employ here, which comes from the geometric analogy described above, the coefficient algebra is the algebra of functions on a quantum weighted projective space and the resulting Cuntz-Pimsner algebra can be thought of as the total space of a noncommutative circle bundle. 

The associated six term exact sequences can then be interpreted as the operator theoretic counterpart of the Gysin exact sequence for circle bundles. Under some mild assumptions on the weight, we will describe the K-theory and K-homology of quantum weighted lens spaces of any dimension, thus extending the results of \cite{ABL15} and \cite{AKL16}.

\subsection*{Acknowledgements}We thank the mathematical research institute MATRIX in Australia and the organisers of the workshop "Refining \Cs algebraic invariants using KK-theory", where part of this research was performed. This work was motivated by discussions with Efren Ruiz about the structure of graph algebras. We thank Adam Rennie for helpful discussion and for his hospitality at the University of Wollongong, where part of this work was carried out. Finally, the author would like to thank Francesco D'Andrea, Giovanni Landi, Bram Mesland and Walter van Suijlekom for helpful comments on an early version of this work. 

\section{Quantum weighted projective and lens spaces}
In this section we describe the coordinate algebras of weighted projective and lens spaces, as described in \cite{BF15, DAL15, ADL16} and their \Cs completions, which were extensively studied in \cite{BS16}.

Classically, weighted projective and lens spaces are quotients of odd-dimensional spheres by actions of the circle and of a finite cyclic group, respectively. The same is true upon replacing the sphere by a \emph{quantum} sphere.

We recall from \cite{VS91} that the coordinate algebra of the quantum odd-dimensional sphere 
$\cO ( S^{2n+1}_q)$ is the universal $*$-algebra with generators the 
$n+1$ elements $\{z_i\}_{i=0,\ldots,n}$ and relations:
\begin{align*} 
z_iz_j &=q^{-1}z_jz_i && 0\leq i<j\leq n \;,  \\
z_i^*z_j &=qz_jz_i^*  &&  i\neq j \;,  \\
[z_n^*,z_n] =0, \qquad [z_i^*,z_i] &=(1-q^2)\sum_{j=i+1}^n z_jz_j^* && i=0,\ldots,n-1 \;,   
\end{align*}
and a sphere relation:
$$
z_0z_0^*+z_1z_1^* +\ldots+z_nz_n^*=1 \;.
$$
The notation of \cite{VS91} is obtained by setting $q=e^{h/2}$. 

A \emph{weight vector} $\m=(m_0, \dots, m_n)$ is a finite sequence of positive integers, called \emph{weights}. A weight vector is said to be \emph{coprime} if $\gcd(m_0,\dots,m_n)=1$; and it is \emph{pairwise coprime} if $\gcd(m_i,m_j)=1$, for all $i\neq j$.

For any weight vector $\m=(m_0,\dots, m_n)$,
we define a weighted circle action $\lbrace \sigma^{\m}_\xi \rbrace_{\xi \in \T^1}$ on the quantum sphere, given on generators by
\begin{equation}
\label{u1act}
\sigma^{\m}_\xi (z_i) =  \xi^{m_i} z_1 \qquad \xi \in \T^1,
\end{equation}
The grading induced by this action is equivalent to that obtained by declaring each $z_i$ to be of degree $m_i$ and $z_i^*$ of degree $-m_i$.

The degree zero part or, equivalently, the fixed point algebra for the action, is the coordinate algebra of the \emph{quantum $n$-dimensional weighted projective space} associated with the weight vector $\m$, and denoted by $\cO(\WP_q^n(\m))$. 

By \cite[Lemma 3.2]{DAL15} a set of generators for the algebra $\cO(\WP_q^n(\m))$ is given by the elements
\[ \begin{split}z_iz_i^*, \quad \forall i=0, \dots, n \\ 
z^{\mathbf{k}} := z_0^{k_0} \cdots z_n^{k_n}, \quad \forall \ \mathbf{k} \in \Z^n : \mathbf{k}\cdot \m  := k_0m_0 + \cdots + k_n m_n = 0.
\end{split}\]
Note that such a set of generators is in general not minimal. 

For some particular classes, one gets a complete characterisation for the generators of the algebra $\cO(\WP^n_q(\m))$. Indeed, we have two classes of weighted projective spaces, in some sense orthogonal to each other, for which it is possible to describe the generators and the representation theory.

The first class consists of those weighted projective spaces for which the weight $\m$ is of the form $\m= \p^{\sharp}$ for $\p$ pairwise coprime, where $\p^{\sharp}$ is defined as the weight vector whose $i$-th component is equal to $\prod_{j\neq i} p_j$. Classically those are the weighted projective spaces that are isomorphic, as projective varieties, to the unweighted projective space $\CP^n$.
By \cite[Theorem 3.8]{DAL15}, having such a weight is a necessary and sufficient condition for the algebra $\cO(\WP^n_q(\m))$ to be generated by the elements
\[a_{i,j}:= z_i^{*m_{j:i}} z_j^{m_{i:j}}, \quad m_{i:j} = m_i / \gcd (m_i,m_j) \quad \forall i, j =0, \dots, n.\]

The second class of examples, that goes in another direction with respect to the class we just described, is that of the multidimensional teardrops \cite{BF15}, that are obtained for the weight vector $m=(1,\dots, 1, l)$ having all but the last entry equal to $1$. As described in \cite[Lemma~6.1]{BF15}
the algebra $\cO(\WP^n_q(1, \dots, m))$ is generated, as a *-algebra, by the elements 
\[
b_{i,j}:= (z_i^*) z_j \quad \mbox{and} \quad 
{c}_{\mathbf{l}}:= z_0^{l_0} \cdots z_{n-1}^{l_{n-1}} z_n^*  ,
\]
for $0 \leq i \leq j \leq n-1  $ and $\mathbf{l} \in \N^n$ such that $ \sum_{i=0}^{n-1} l_i= m$.

 As a particular case of both constructions, for $\m=(1,\ldots,1)$ one gets 
the coordinate algebra $\cO(\mathbb{CP}^n_q)$ of the quantum projective space $\CP^n_q$. This is the  $*$-subalgebra of $\cO(S^{2n+1}_q)$ generated by the elements $p_{ij}:=z_i^*z_j$ for $i,j=0,1,\ldots,n$.

Let now $N$ be a fixed positive integer. By restriction $\cO(S^{2n+1}_q)$ admits an action of the cyclic group $\Z_N$ given by
\[
\sigma^{1/N}_{\m}(\zeta) z_i = \zeta^{m_i} z_i,
\]
where $\zeta$ is a generator of $\Z_N$.

The coordinate algebra of the quantum lens space $\cO(L^{2n+1}_q(N;\m))$ is defined as the fixed point algebra for this action:
\begin{equation}
\cO(L^{2n+1}_q(N; \m)) := \cO(S^{2n+1}_q)^{\Z_N}. 
\end{equation}

\subsection{Principal bundle structures}

In noncommutative geometry the notion of a free action of a quantum group $H$ on $\A$ is translated into the notion of having a principal coaction on $\A$, which in algebraic terms amounts to having a Hopf-Galois extension. As described in \cite[Theorem~8.1.7]{M93}, having a strongly graded algebra over a group G is equivalent to having a Hopf-Galois extension over the group algebra $\C G$. In this work we will only focus on classical Abelian groups; in that case principality is equivalent to the induced grading over the Pontryagin dual being \emph{strong} .

For a group $G$, a $G$-graded algebra $\A$ is an algebra that decomposes as a direct sum $\A = \oplus_{g \in G} \A_g$, with $\A_g \A_h \subseteq \A_{gh}$. Whenever $\A_g \A_h = \A_{gh}$ for all $g, h \in G$, one says that the grading is strong. Note that by it is enough to check this condition on a set of generators of the group. 

By \cite[Lemma~2.1]{BF15} strong gradings are preserved under extensions of Abelian groups, i.e. given an exact sequence \[ \xymatrix{0 \ar[r] &  K \ar[r]^-{\varphi} & G \ar[r]^-{\pi}& H \ar[r] & 0}, \]
a $G$-graded algebra $\A$ is strongly graded if the induced $H$-grading on $\A$ and the induced $K$-grading on $\A^K := \bigoplus_{k \in K}\A_{\varphi(k)}$
are strong.

In our case we will be dealing with the group $\Z =\hat{\T}$ and the finite group cyclic group $\Z_N = \hat{\Z_N}$, so we will be interested in the short exact sequence
 \[ \xymatrix{0 \ar[r] &  \Z \ar[r]^-{N\cdot} & \Z \ar[r]^-{\pi}& \Z_N \ar[r] & 0.}\]

The $\Z_N$ action on $\cO(S^{2n+1}_q)$ induces a $\Z_N$ grading which is strong, and $\cO(L^{2n+1}_q(N;\m))$ is the degree-zero subalgebra with respect to that grading. The lens space $\cO(L^{2n+1}_q(N;\m))$ is a $\Z$-graded algebra with respect to the grading induced by that of $\cO(S^{2n+1}_q)$, by saying that $x \in \cO(L^{2n+1}_q(N;\m))$ has degree $n$ if and only if it has degree $nN$ in $\cO(S^{2n+1}_q)$. The degree-zero part is given by the coordinate algebra of the quantum projective space $\cO(\WP^n_q(\m))$. The induced grading is not always strong. However, by \cite[Proposition~4.2]{BF15}, for $N_{\m} := \prod_{i=1}^n m_i$, the algebra $\cO(L^{2n+1}_q(N_{\m};\m))$ is strongly $\Z$-graded.
As a consequence, the coordinate algebra of the quantum weighted lens space $  \cO(L^{2n+1}_{q} (N_{\m}; \m))$ has the structure of a quantum principal circle bundle over the $n$-dimensional weighted projective space $\cO (\WP^n_q(\m))$.

Furthermore, by \cite[Proposition~4.6]{AKL16} the coordinate algebra of the quantum weighted lens space $  \cO(L^{2n+1}_{q} (d\cdot N_{\m}; \m))$ also has the structure of a quantum principal circle bundle over the $n$-dimensional weighted projective space $\cO (\WP^n_q(\m))$. This can be seen as a consequence of the aforementioned Lemma~2.1 of \cite{BF15}.

\subsection{\Cs completions}

The \Cs algebra of the odd-dimensional quantum sphere $C( S^{2n+1}_q)$ is the completion of the *-algebra $\cO(S^{2n+1}_q)$ in the universal \Cs norm. This \Cs algebra can be realised as a graph \Cs algebra. 

The \Cs algebra of the quantum weighted projective space is defined as the fixed point algebra for the circle action on $C( S^{2n+1}_q)$ obtained by extending $\sigma$. A complete characterisation of those \Cs algebras is not available at the moment; partial results were obtained in \cite{BS16} for a large class of weighted lens spaces, those with weight vector $\m$ satisfying $\gcd (m_j, m_n) =1$ for at least one $j<n$. By \cite[Proposition~3.2]{BS16} there exists an exact sequence of \Cs algebras
\beq
\label{eq:extW}
 \xymatrix{0 \ar[r] & \K^{\oplus m_n} \ar[r] & C(\WP^n(\m)) \ar[r] & C(\WP^{n-1}_q(\m_n)) \ar[r] & 0,}
  \eeq
 where $\m_n$ denotes the weight vector $(m_0, \dots, m_{n-1})$.
 
The K-theory groups of the \Cs algebraic weighted projective spaces can be computed by iterative use of the extension \eqref{eq:extW} under suitable assumptions on the weight vector $\m$.
 \begin{prop}[{\cite[Corollary 3.2]{BS16}}]
 \label{prop:Kthweigh}
 \label{ass:weight}
Let $\m$ be a weight vector with the property that for each $j \geq 1$ there exists $i<j$ such that $\gcd (m_i, m_j) =1$. Then the K-theory groups of the quantum weighted projective spaces are given by
\[K_0(C(\WP^n_q(\m))= \Z^{1+\sum_{i=1}^{n} m_i}, \quad K_1(C(\WP^n_q(\m))=0. \]
\end{prop}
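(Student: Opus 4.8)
The plan is to argue by induction on the dimension $n$, taking the extension \eqref{eq:extW} as the inductive engine and feeding it into the six-term exact sequence in K-theory. For the base case I would take $n=0$, where $\m=(m_0)$, the space $\WP^0_q(\m)$ is a point and $C(\WP^0_q(\m))\cong\C$; this gives $K_0=\Z$ and $K_1=0$, consistent with the claimed formula since $\sum_{i=1}^0 m_i=0$.

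For the inductive step I would first check that the hypothesis on $\m$ is exactly what licenses the use of \eqref{eq:extW}: with $j=n$ it supplies some $i<n$ with $\gcd(m_i,m_n)=1$, which is precisely the condition under which \cite[Proposition~3.2]{BS16} produces the extension. I would then note that the hypothesis descends to the truncated weight $\m_n=(m_0,\dots,m_{n-1})$, being nothing but its restriction to indices $j\le n-1$, so that the inductive hypothesis applies to $C(\WP^{n-1}_q(\m_n))$ and yields $K_0=\Z^{1+\sum_{i=1}^{n-1}m_i}$, $K_1=0$. Using stability of K-theory, so that $K_0(\K^{\oplus m_n})=\Z^{m_n}$ and $K_1(\K^{\oplus m_n})=0$, the six-term sequence attached to \eqref{eq:extW} becomes
\[ \xymatrix{
\Z^{m_n} \ar[r] & K_0(C(\WP^n_q(\m))) \ar[r] & \Z^{1+\sum_{i=1}^{n-1}m_i} \ar[d] \\
0 \ar[u] & K_1(C(\WP^n_q(\m))) \ar[l] & 0 \ar[l]
} \]
Since the two corners $K_1(\K^{\oplus m_n})$ and $K_1(C(\WP^{n-1}_q(\m_n)))$ vanish, both connecting maps are forced to be zero, and the diagram collapses to $K_1(C(\WP^n_q(\m)))=0$ together with the short exact sequence
\[ 0 \to \Z^{m_n} \to K_0(C(\WP^n_q(\m))) \to \Z^{1+\sum_{i=1}^{n-1}m_i} \to 0. \]
As the quotient is free abelian this splits, and a rank count gives $K_0(C(\WP^n_q(\m)))\cong\Z^{1+\sum_{i=1}^{n}m_i}$, completing the induction.

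The K-theoretic part of the argument is essentially mechanical once these inputs are assembled, so the step I expect to require the most care is the bookkeeping around the weight hypothesis: confirming that the $\gcd$ condition is exactly what guarantees \eqref{eq:extW} at each level, and that it is preserved under every truncation $\m \mapsto \m_n$ encountered along the induction. If that condition failed, the ideal in \eqref{eq:extW} would no longer be of the form $\K^{\oplus m_n}$ (or the extension might fail altogether), and the clean splitting — and hence the rank count — would no longer be available.
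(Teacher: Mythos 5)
Your proposal is correct and takes essentially the same route as the paper: the paper imports this result from \cite[Corollary 3.2]{BS16} and remarks only that it follows ``by iterative use of the extension \eqref{eq:extW}'', which is exactly the induction you carry out, with the six-term sequence collapsing because $K_1$ vanishes for both the compact ideal and the quotient. Your added bookkeeping (checking that the $\gcd$ hypothesis licenses \eqref{eq:extW} at each level and survives truncation to $\m_n$, and that the resulting short exact sequence splits since the quotient is free abelian) is precisely the content the citation suppresses.
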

The \Cs algebraic quantum lens space is defined as the fixed point algebra by the action of $\Z_N$. By constructing a conditional expectation for the $\Z_N$-action, one can show that it agrees with the closure of the algebraic quantum lens space $\cO(L^{2n+1}_q(N; \m))$ with respect to the universal \Cs norm on $C(S^{2n+1}_q)$. It is isomorphic to the \Cs algebra of a directed graph.

\section{A Cuntz-Pimsner model for quantum lens spaces}
Cuntz-Pimsner algebras \cite{Pi97} are universal \Cs algebras constructed out of a \Cs correspondence $E$ over a \Cs algebra $B$. They encompass a a large class of examples, like crossed product by the integers, Cuntz and Cuntz-Krieger algebras \cite{CuKr:TMC,Cun:SGI}, graph algebras and C*-algebras associated to a partial automorphism \cite{Exe:CPP}. We now give a simple description of this algebra for the case of interest for this work. 

Under the assumptions that $B$ is unital and that $E$ is a self-Morita equivalence bimodule, i.e. we have left action implemented by an isomorphism $\phi: B \to \End_B(E)$, the Cuntz-Pimsner algebra $O_E$ admits a description in terms of generators and commutation relations. This construction, which can be found for instance in \cite[Section 2]{KPW}, works for any finitely generated projective module over a unital \Cs algebra and relies on the existence of a finite frame for the module $E$, i.e. a finite set of elements $\lbrace \xi_i \rbrace_{i=1}^{n}$ of $E$ satisfying 
$$
\xi = \sum_{j=1}^{n}  \eta_j \langle \eta_j, \xi \rangle_{B}.
$$
for any $\xi \in E$. 

The algebra $O_E$ is realised as the universal \mbox{\Cs algebra} generated by $B$ together with 
$n$ operators $S_1, \dots, S_n$, satisfying
\begin{align}
& S_i^* S_j = \langle{\eta_i, \eta_j}\rangle_{B}, \quad \sum\nolimits_j S_j S_j ^* = 1, \quad \mbox{and} \quad 
b S_j = \sum\nolimits_{i}S_i \langle{\eta_i, \phi(b) \eta_j}\rangle_{B},
\end{align}
for $b \in B$, and $j=1,\dots, n$. 

\begin{exmp}
The module of sections of the tautological line bundle $\mathcal{E}$ over the quantum projective line is a self-Morita equivalence bimodule over the algebra $C(\CP^1)$. The corresponding Cuntz-Pimsner algebra $O_{\Gamma(\mathcal{E})}$ is isomorphic to the algebra of continuous functions on the three sphere $C(S^3)$. 
\end{exmp}

More generally, the Cuntz-Pimsner algebra $O_E$ of a self-Morita equivalence bimodule can be thought of as the algebra of continuous functions on the total space of a quantum principal circle bundle. While the commutative version of this analogy was spelled out in \cite{GG13}, the more general case of quantum principal circle bundles was described in \cite{AKL16}. We also refer to the review article \cite{ADL16} for more details and recall the salient points here.

Given a \Cs algebra $A$ together with a strongly continuous circle action $\sigma := \lbrace \sigma_{\xi} \rbrace_{\xi \in \T^1}$, we define the $n$-th spectral subspace as 
\[A_{(n)} := \lbrace a \in A \ \vert \ \sigma_{\xi} (a) = \xi^{-n} a \quad \forall \xi \in \T^1 \rbrace.\]

Then the invariant subspace $A_{(0)} \subseteq A$ is a \Cs subalgebra and each $A_{(n)}$ is a Hilbert \Cs bimodule over $A_{(0)}$. If the module $A_{(1)}$ is a self-Morita equivalence bimodule, which is equivalent to the $\Z$ grading given by the spectral subspaces being strong, then the action $\sigma$ is said to be saturated. Then by \cite[Prop. 3.5]{AKL16} the Cuntz-Pimsner algebra $O_{A_{(1)}}$ of the first spectral subspace $A_{(1)}$ is isomorphic to the algebra $A$.

Let see what this means in our examples: if we denote by
$E$ the first spectral subspace of $C(S^{2n+1}_q)$ for the weighted action of $\T^1$, then we get that the $C^*$-algebra
$C(L^{2n+1}_{q} (N_{\m}; \m))$ is isomorphic to the Pimsner algebra $O_E$ over $C(\WP^n_q(\m))$ associated to $E$.

More generally, by \cite[Thm. 3.9]{AKL16}. for any $d\geq 1$, the \Cs algebraic lens space
$C(L^{2n+1}_{q} (d\cdot N_{\m}; \m))$ is isomorphic to the Pimsner algebra $O_E$ associated to the module $E^{\otimes d}$ over $C(\WP^n_q(\m))$ .

As particular cases, $C(S^{2n+1}_q)$ is a Pimsner algebra over $C(\CP^n_q)$, and more generally the \emph{unweighted} lens space $C(L^{2n+1}_q(d; \underline{1}))$, for the weight vector with entries identically one, is a Pimsner algebra over $C(\CP^n_q)$ for any $d\geq 1$. Those algebras and their K-theory group were the subject of \cite{ABL15}.

\subsection{Six-term exact sequences}
For a Pismner algebra one has natural exact sequences in bivariant K-theory, relating the KK-groups of the Pimsner algebra $O_E$ with those of the base space algebra $B$. Those sequences were constructed by Pimsner, see \cite[Theorem~4.8]{Pi97} and arise as six-term exact sequences associated to a semisplit extension of \Cs algebras in which the Pimsner algebra is the quotient, the ideal is Morita equivalent to the base and the middle algebra is KK-equivalent to the base. Using those identifications, the resulting exact sequences in bivariant K-theory read:

\[
 \xymatrix{
KK_0(C, B) \ar[r]^{\otimes_B(1 - [E])} & KK_0(C,B) \ar[r]^-{i_*}& KK_0(C,O_E) \ar[d]^{\partial} \\
KK_1(C,O_E)\ar[u]^{\partial} & KK_1(C,B) \ar[l]^-{i_*} & KK_1(C,B) \ar[l]^{\otimes_B(1 - [E])}
 }
\]
 and 
 \[
\xymatrix{
 KK_0(B,C)\ar[d]_{\partial} & KK_0(B,C) \ar[l]_{(1 - [E])\otimes_B} & KK_0(O_E,C) \ar[l]_-{i^*} \\
KK_1(O_E,C) \ar[r]_{i^*}&  KK_1(B,C) \ar[r]_{(1 - [E])\otimes_B} & KK_1(B,C) \ar[u]_{\partial}}.
\]

A crucial role is played by the Kasparov product with the class of the identity $1 \in KK(B,B)$ minus the class of the bimodule $[E] \in KK(B,B)$. 

The connecting homomorphism $\partial$ is implemented by taking the Kasparov product with the class of the defining extension. An unbounded representative for this extension class was constructed in \cite{RRS15} and later generalised in \cite{GMR15}. A treatment of the non-unital case is in \cite{AR16}. 

In the case of self-Morita equivalence bimodules these could be considered as a generalization of the classical \emph{Gysin sequence} in $K$-theory (cf.\cite[IV.1.13]{Ka78}) 
for the \emph{noncommutative} line bundle $E$ over $B$, with the Kasparov product with $1 - [E]$ playing the role of the cup product with 
\emph{Euler class} $\chi(E):=1 - [E]$ of the line bundle $E$.

Examples of Gysin sequences in $K$-theory were given in \cite{ABL15} for line bundles over quantum projective spaces leading to a class of quantum lens spaces. 
These examples were generalized later in \cite{AKL16} for a class of quantum lens spaces as circle bundles over quantum weighted projective spaces with arbitrary weights.

To ease our notation, we let 
$C(L_q(d)) := C(L^{2n+1}(d\cdot N_{\m},\m))$. Also, $E$ will denote the Hilbert \Cs module given by the first spectral subspace for the weighted circle action on $C(S^{2n+1}_q)$.

Then, given any separable $C^*$-algebra $C$, we obtain two six term exact sequences in KK-theory.
\begin{equation}\label{eq:6tesKth}
\xymatrix{
KK_0(C, C(\WP_q^{n}(\m))) \ar[r]^-{(1 - [E^{\otimes d}])\otimes} & KK_0(C,C(\WP_q^{n}(\m))) \ar[r]^{i_*} & KK_0\big(C,C(L_q(d))\big) \ar[d]^{\partial} \\
KK_1(C,C(L_q(d))) \ar[u]^{\partial} & KK_1(C,C(\WP_q^{n}(\m))) \ar[l]^{i_*} & KK_1(C,C(\WP_q^{n}(\m)) \ar[l]^-{(1 - [E^{\otimes d}])\otimes} }
\end{equation}
and
\begin{equation}\label{eq:6tesKhom}
\xymatrix{
KK_0(C(\WP_q^{n}(\m)),C)\ar[d]_{\partial} & KK_0(C(\WP_q^{n}(\m)),C) \ar[l]_{\otimes(1 - [E^{\otimes d}])} & KK_0\big(C(L_q(d)), C\big) \ar[l]_{i^*} \\
KK_1\big(C(L_q(d)),C\big) \ar[r]_{i^*}& KK_1(C(\WP_q^{n}(\m)),C) \ar[r]_{\otimes(1 - [E^{\otimes d}])}& KK_1(C(\WP_q^n(\m)),C) \ar[u]_{\partial} }
 \end{equation}
where $i_*$ and $i^*$ are the maps in KK-theory induced by the inclusion of the coefficient algebra into the Pimsner algebra $i:C(\WP^n_q) \hookrightarrow O_{E^{\otimes d}} \simeq C(L_q(d))$.

We will refer to these two sequences as the \emph{Gysin sequences} (in $KK$-theory) for the \Cs algebraic quantum lens space $C(L_q^{2n+1}(d\cdot N_{\m};\m))$.

\subsection{Computing the K-theory and K-homology of quantum lens spaces}

We finish by describing how the exact sequences \eqref{eq:6tesKhom} and \eqref{eq:6tesKth} can be used to obtain information about the KK-theory groups of quantum weighted lens spaces. 

Even though those sequences exist for every choice of weight $\m$, we will now restrict our attention to the case of weight vectors satisfying the assumptions of Proposition \ref{prop:Kthweigh}. This will allow us to use the computations of the K-theory groups of the weighted projective spaces in our computations.

We will now state an easy corollary of the results contained in \cite{BS16}.
\begin{prop}
Let $\m$ be a weight vector satisfying the assumptions of Proposition \ref{prop:Kthweigh}. Then the \Cs algebra  $C(\WP^n(\m))$ is KK-equivalent to $\C^{1+m_1 +\dots + m_n}$.
\end{prop}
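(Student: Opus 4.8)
The plan is to deduce the statement from the K-theory computation of Proposition~\ref{prop:Kthweigh} together with the Universal Coefficient Theorem of Rosenberg and Schochet. Recall that two separable C*-algebras lying in the bootstrap class $\mathcal{N}$ are $KK$-equivalent precisely when their K-theory groups are isomorphic as graded abelian groups: any such isomorphism is then induced by an invertible element of $KK$. Writing $k := 1 + m_1 + \dots + m_n$, the algebra $\C^{k}$ is finite-dimensional and commutative, hence lies in $\mathcal{N}$, with $K_0(\C^k) = \Z^{k}$ and $K_1(\C^k) = 0$. By Proposition~\ref{prop:Kthweigh}, under our standing hypothesis on $\m$ the algebra $C(\WP^n(\m))$ has exactly these same K-theory groups. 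Thus the whole argument reduces to checking that $C(\WP^n(\m))$ also belongs to $\mathcal{N}$ and then invoking the UCT.

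To establish bootstrap membership I would argue by induction on $n$ using the extension~\eqref{eq:extW}. The base case $n=0$ gives $C(\WP^0_q(m_0)) = \C \in \mathcal{N}$. For the inductive step one observes that the truncated weight vector $\m_n = (m_0, \dots, m_{n-1})$ still satisfies the hypothesis of Proposition~\ref{prop:Kthweigh}, since the divisibility condition imposed on the indices $j = 1, \dots, n-1$ is unaffected by dropping $m_n$; hence the quotient $C(\WP^{n-1}_q(\m_n))$ lies in $\mathcal{N}$ by the inductive hypothesis. The ideal $\K^{\oplus m_n}$ is $KK$-equivalent to $\C^{m_n}$ and so lies in $\mathcal{N}$, and since $\mathcal{N}$ is closed under extensions we conclude $C(\WP^n(\m)) \in \mathcal{N}$.

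With both algebras in $\mathcal{N}$ and with matching K-theory, the UCT then delivers the $KK$-equivalence. Concretely, because $K_*(\C^k)$ is free and concentrated in even degree, the $\Ext$ term in the UCT exact sequence vanishes, so that the natural map $KK_0(\C^k, C(\WP^n(\m))) \to \Hom(\Z^k, \Z^k)$ is an isomorphism, and symmetrically for $KK_0(C(\WP^n(\m)), \C^k)$; the class corresponding to a chosen isomorphism of $K_0$-groups is then invertible, with inverse the class corresponding to the inverse isomorphism. The only point that genuinely requires care, and which I regard as the main (though still routine) obstacle, is the verification of bootstrap membership together with the remark that $\m_n$ inherits the hypotheses of Proposition~\ref{prop:Kthweigh}; once these are in place the conclusion is a direct and formal application of the UCT, consistent with the proposition being advertised as an easy corollary of the results of \cite{BS16}.
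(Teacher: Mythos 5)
Your proposal is correct and follows essentially the same route as the paper's proof: establish membership of $C(\WP^n_q(\m))$ in the UCT (bootstrap) class by iterated use of the extension \eqref{eq:extW}, using closure under extensions and the fact that $\K^{\oplus m_n}$ lies in the class, then match K-theory groups via Proposition \ref{prop:Kthweigh} and invoke the fact that within the UCT class isomorphic K-theory implies KK-equivalence. Your write-up merely makes explicit two points the paper leaves implicit, namely the induction with the check that the truncated weight vector inherits the hypothesis, and the vanishing of the $\Ext$ term that lifts a K-theory isomorphism to an invertible KK-class.
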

\begin{proof}
As a first step we use the fact that the UCT class is closed under extensions and contains the algebra of compact operators. Whenever the weight vector $\m$ satisfies the assumptions of Proposition \ref{prop:Kthweigh}, by iterated use of the exact sequence \eqref{eq:extW} we obtain that the \Cs algebraic weighted projective space is also in the UCT class. By Proposition \ref{prop:Kthweigh} its K-theory groups are isomoprhic to those of $\C^{1+m_1 +\dots + m_n}$. The claim follows from the fact that in the UCT class two \Cs algebras are KK-equivalent if and only if they have isomorphic K-theory groups (cf. \cite[Corollary~23.10.2]{Bl98}).
\end{proof}

Note that for $n=1$ the extension \eqref{eq:extW} admits a completely positive splitting, hence KK-equivalence follows from the fact that the algebra $C(\WP^1_q(\m))$ is isomorphic to the unital \Cs algebra ${\mathcal{K}^{m_1}} \oplus \C$. Explicit representatives for the two KK-equivalences were constructed in \cite{AKL16}.

For ease of notation we will denote by $M:=m_1+\dots+m_n$. We let $[I] \in KK(\C^{M+1}, C(\WP^n_q(\m)))$ and $[\Pi] \in KK(C(\WP^n_q(\m)),\C^M) $ be the two classes that implement the KK-equivalence between $\C^{M+1}$ and $C(\WP^n_q(\m))$, i.e. satisfy
\begin{equation}
\label{eq:KKequiv}
[I] \otimes_{C(\WP^n_q(\m))} [\Pi] = 1_{KK(\C^{M+1}, \C^{M+1})}, \quad  [\Pi] \otimes_{\C^{M+1}} [I] = 1_{KK(C(\WP^n_q(\m)), C(\WP^n_q(\m)))}.
\end{equation}
These KK-equivalences can be used to simplify the exact sequences \eqref{eq:6tesKth} and \eqref{eq:6tesKhom}. Indeed, one can use them to replace the KK-groups of $C(\WP^n_q(\m))$ with those of the vector space $\C^{M+1}$ and then use the natural isomorphisms \[ KK_i(C, \C^M) \simeq \bigoplus_{k=0}^{M+1} K^i(C) \quad \mbox{and} \quad  KK_i(\C^M,C) \simeq \bigoplus_{k=0}^{M+1} K_i(C), \quad i=0,1.\]

Tensoring the class of the Hilbert \Cs module $E$ with the KK-equivalences $[I]$ and $[\Pi]$ one gets a class
\begin{equation}
\label{eq:matrix}
[I] \otimes_{C(\WP^n_q(\m))} [E] \otimes_{C(\WP^n_q(\m))}[\Pi] \in KK(\C^{M+1},\C^{M+1}). 
\end{equation}

Since the ring $KK(\C^{M+1}, \C^{M+1}) \simeq \Mat_{M+1} (\Z)$, we can look at the corresponding matrix implementing the map \eqref{eq:matrix}, that we denote by $\mathrm{A}$. The six term exact sequence in \eqref{eq:6tesKth} becomes
\[
\xymatrix {\oplus_{i =0}^{M+1} K^0(C) \ar[r]^-{1 - \mathrm{A}^d} & \oplus_{i = 0}^{M+1} K^0(C) \ar[r] & KK_0\big(C,C(L_q(d))\big) \ar[d]\\
KK_1(C,C(L_q(d)))\ar[u] &\oplus_{i = 0}^{M+1} K^1(C) \ar[l] & \oplus_{i = 0}^{M+1} K^1(C) \ar[l]^-{1 - \mathrm{A}^d} }\, ,\]
while, denoting with $\mathrm{A}^t$ the transpose of $\mathrm{A}$, the six term exact sequence in \eqref{eq:6tesKhom} becomes
\[\xymatrix{
\oplus_{i = 0}^{M+1} K_0(C) \ar[d] & \oplus_{r = 0}^{M+1} K_0(C) \ar[l]^{1 - (\mathrm{A}^t)^d} &  KK_0\big(C(L_q(d)),C\big) \ar[l] \\
KK_1\big(C(L_q(d)),C\big) \ar[r] & \oplus_{i = 0}^{M+1} K_1(C) \ar[r]^-{1 - (\mathrm{A}^t)^d}& \oplus_{i = 0}^{M+1} K_1(C) \ar[u]}
 \, .\]
 
 For $C=\C$, using the fact that $K_1(\WP^n_q(\m))=K^1(\WP^n_q(\m)=0$, the corresponding exact sequences in K-theory and K-homology become of the form
 \[\xymatrix {0 \ar[r]& K_0\big(C(L_q(d)\big) \ar[r] & \Z^{M+1} \ar[r]^-{1 - \mathrm{A}^d} & \Z^{M+1} \ar[r] & K_0\big(C(L_q(d)\big) \ar[r] & 0}\, ,\]\
 and
 \[\xymatrix{
0& K^0\big(C(L_q(d))\big)\ar[l] & \Z^{M+1} \ar[l] & \Z^{M+1} \ar[l]^{1 - (\mathrm{A}^t)^d} &  K^0\big(C(L_q(d))\big) \ar[l] &0\ar[l]}
 \, .\]
 thus allowing for the computation of the K-theory and K-homology groups of the quantum lens spaces as kernels and cokernels of a suitable integer matrix.

\begin{thm}
Let $\m$ be a weight vector satisfying the assumptions of Proposition \ref{prop:Kthweigh}. Then for any $d \in \N$ we have that
\[
K_0\big( C(L_q(d)) \big) \simeq \mathrm{Coker}(1 - \mathrm{A}^d), \qquad K_1\big(C(L_q(d))\big) \simeq \mathrm{Ker}(1 - \mathrm{A}^d) \]
and
\[
K^0\big( C(L_q(dlk;k,l)) \big) \simeq \mathrm{Ker}(1 - (\mathrm{A}^t)^d ), \qquad K^1\big( C(L_q(d)) \big) \simeq \mathrm{Coker}(1 - (\mathrm{A}^t)^d) \, .
\]
\end{thm}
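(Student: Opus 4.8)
The plan is to obtain the four isomorphisms directly from the simplified Gysin sequences, which have already been assembled in the preceding discussion. The only work left is to specialise the two six-term sequences \eqref{eq:6tesKth} and \eqref{eq:6tesKhom} to $C = \C$ and to feed in the vanishing of the odd (co)homology of the weighted projective space from Proposition \ref{prop:Kthweigh}. Concretely, the KK-equivalence \eqref{eq:KKequiv} identifies $KK_0(\C, C(\WP^n_q(\m)))$ with $\Z^{M+1}$ and $KK_1(\C, C(\WP^n_q(\m)))$ with $0$, while under $KK(\C^{M+1}, \C^{M+1}) \simeq \Mat_{M+1}(\Z)$ the operation $(1-[E^{\otimes d}])\otimes$ becomes multiplication by $1 - \mathrm{A}^d$. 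Once the two vanishing corners are inserted, each hexagon unfolds into a four-term exact sequence, and the theorem is read off as a kernel and a cokernel.

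For the K-theory statement I would take the unfolded form of \eqref{eq:6tesKth}. Since $K_1(C(\WP^n_q(\m))) = 0$ kills two of the six corners, exactness forces the connecting map out of $K_0(C(L_q(d)))$ to vanish and the incoming map into $K_1(C(L_q(d)))$ to vanish, so the hexagon collapses to
\[
0 \longrightarrow K_1(C(L_q(d))) \xrightarrow{\ \partial\ } \Z^{M+1} \xrightarrow{\ 1 - \mathrm{A}^d\ } \Z^{M+1} \xrightarrow{\ i_*\ } K_0(C(L_q(d))) \longrightarrow 0.
\]
Exactness at the two inner terms identifies $K_1(C(L_q(d)))$ with $\mathrm{Ker}(1-\mathrm{A}^d)$ (as the image of the injective $\partial$) and $K_0(C(L_q(d)))$ with $\Z^{M+1}/\mathrm{im}(1-\mathrm{A}^d) = \mathrm{Coker}(1-\mathrm{A}^d)$ (since $i_*$ is surjective), which is precisely the first pair.

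The K-homology statement is formally dual. Specialising \eqref{eq:6tesKhom} to $C = \C$ and using $K^1(C(\WP^n_q(\m))) = 0$ yields in the same way
\[
0 \longrightarrow K^0(C(L_q(d))) \xrightarrow{\ i^*\ } \Z^{M+1} \xrightarrow{\ 1 - (\mathrm{A}^t)^d\ } \Z^{M+1} \xrightarrow{\ \partial\ } K^1(C(L_q(d))) \longrightarrow 0,
\]
giving $K^0 \simeq \mathrm{Ker}(1 - (\mathrm{A}^t)^d)$ and $K^1 \simeq \mathrm{Coker}(1 - (\mathrm{A}^t)^d)$. The transpose enters because K-homology is the contravariant theory: the endomorphism of $\Z^{M+1}$ induced by tensoring on the left variable with $1-[E]$ is the adjoint of the one acting on K-theory, hence represented by $\mathrm{A}^t$ under the dual bases supplied by the same KK-equivalence.

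The one point I would check carefully, rather than treat as bookkeeping, is that the $d$-th tensor power $E^{\otimes d}$ produces exactly the matrix power $\mathrm{A}^d$. This is where multiplicativity of the Kasparov product is used together with the identity $[\Pi] \otimes_{\C^{M+1}} [I] = 1_{KK(C(\WP^n_q(\m)), C(\WP^n_q(\m)))}$ from \eqref{eq:KKequiv}: inserting this factor between consecutive copies of $[E]$ telescopes
\[
[I] \otimes [E^{\otimes d}] \otimes [\Pi] = \big([I] \otimes [E] \otimes [\Pi]\big)^{d} = \mathrm{A}^d,
\]
so that the Euler-class operator $1 - [E^{\otimes d}]$ is transported to $1 - \mathrm{A}^d$ (respectively its transpose in the homological sequence). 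Everything else is the standard collapse of a six-term sequence with two vanishing terms, so no further obstacle arises.
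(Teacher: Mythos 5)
Your proof is correct and follows essentially the same route as the paper: specialise the two Gysin sequences to $C=\C$, use the vanishing of $K_1$ and $K^1$ of the weighted projective space (coming from its KK-equivalence with $\C^{M+1}$) to collapse each hexagon into a four-term exact sequence, and read off the kernel and cokernel of $1-\mathrm{A}^d$, respectively $1-(\mathrm{A}^t)^d$. Your explicit telescoping argument showing $[I]\otimes[E^{\otimes d}]\otimes[\Pi]=\mathrm{A}^d$ is left implicit in the paper, and your four-term sequences are in fact written more accurately than the paper's displayed ones, which contain a typo ($K_0$, resp.\ $K^0$, appearing where $K_1$, resp.\ $K^1$, is meant).
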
 
It remains an open problem to describe the precise relationship of our matrix $\mathrm{A}$ with the matrix used in \cite{BS16} to compute the K-theory of quantum lens spaces. 
 
\section{Final remarks}
An interesting class of lens spaces that lies in the intersection of those studied in \cite{DAL15} and \cite{BS16} is that for which the weight vector $\m$ satisfies $m_0=1$ and $m_i=m$ for all $i=1, \dots, n$. The associated coprime weight vector $\p$ for which $\m=\p^{\sharp}$ is then $\p=(m,1,\dots,1)$. It is straightforward to check that the number of independent Fredholm modules constructed in \cite{DAL15}, given by the formula 
\[
1 + \sum_{k=1}^{n}p_0p_1\dots p_{k-1} , 
\]
equals in that case $1 +M$, the dimension of the K-homology group $K^0(\WP^n_q(\m))$. 

We are also able to give, at least for this special class of examples, a positive answer to the question left open at the end of \cite[Section~9]{DAL15}, where the authors asked whether the Fredholm module they constructed actually built a complete set of generators for the K-homology group $K^0(\WP^n_q(\m))$.

Moreover these Fredholm modules can be used to give an explicit expression for the KK-equivalences $[\Pi]$ in \eqref{eq:KKequiv}, using a construction similar to the one of \cite[Section~7.4]{AKL16}, thus allowing one to write the matrix $\mathrm{A}$ in the form of a matrix of pairings.

Those computations go beyond the scope of the present paper and we postpone them to a later, more detailed work, where we plan to also address the problem of finding explicit representatives of K-theory and K-homology classes under less restrictive conditions on the set of weights.

%
%



%

\end{document}